\newtheorem{theorem}{Theorem}[section]
 \newtheorem{defin}[theorem]{Definition}
 \newtheorem{remark}[theorem]{Remark}
\title{On some classes of partial difference equations}
\author{Eszter Gselmann }
\keywords{partial difference equation; Laplace's equation, Poisson equation, biharmonic equation, 
mollification theorem, Harnack's principle}
\subjclass{39A14, 39B52}
\thanks{This research has been supported by the Hungarian Scientific Research Fund
(OTKA) Grant NK 81402. }
\begin{document}
\begin{abstract}
 In one of his work, appeared in 1969, John A.~Baker initiated the systematic investigation of some partial 
difference equations. The main purpose of this paper is to continue and to extend these investigations. 
Firstly, we present how such type of equations can be classified into elliptic, parabolic and hyperbolic subclasses, respectively. 
After that, we show solution methods in the elliptic class. Here we will deal in details with the 
discrete version of the following partial differential equations: Laplace's equation, Poisson equation and the (in)homogeneous biharmonic equation. 
\end{abstract}
\maketitle

\section{Introduction}
The aim of this paper is to give a systematic description of certain type of partial difference equations. 
Throughout this note $\mathbb{N}, \mathbb{Z}, \mathbb{Q}$ and $\mathbb{R}$ denote the 
sets of the natural, integer, rational and real numbers, respectively. 

Let $\Omega\subset\mathbb{R}^{n}$ be a set and $u\colon \Omega\to\mathbb{R}$ be a function, 
then 
\[
 \underset{(x_{i})}{\Delta_{h}}u(x_{1}, \ldots, x_{n})=
u(x_{1}, \ldots, x_{i-1}, x_{i}+h, x_{i+1}, x_{n})-u(x_{1}, \ldots, x_{n})
\]
is called the \emph{first order partial difference} of the function $u$ with respect to the variable $x_{i}$. 
The higher order partial differences are defined recursively, that is, 
if $k\in\mathbb{N}$ and $k\geq 2$ and $i=1, \ldots, n$, then 
\[
 \underset{(x_{i})}{\Delta^{k}_{h}}u(\textbf{x})=\underset{(x_{i})}{\Delta_{h}}\left(\underset{(x_{i})}{\Delta^{k-1}_{h}}\right)u(\textbf{x}), 
\]
assuming that $\textbf{x}+khe_{i}\in\Omega$ is valid for all $\textbf{x}\in \Omega$, where 
$e_{i}$ denotes the $i$\textsuperscript{th} standard base vector of $\mathbb{R}^{n}$. 
Further, if $k_{1}, \ldots, k_{n}\in\mathbb{N}\cup\left\{0\right\}$ and $\sum_{i=1}^{n}k_{i}=N$, then 
\[
 \underset{(x_{1}^{k_{1}}, \ldots, x_{n}^{k_{n}})}{\Delta^{N}_{h}}u(\textbf{x})=
\underset{(x_{1})}{\Delta^{k_{1}}_{h}}\ldots\underset{(x_{n})}{\Delta^{k_{n}}_{h}}u(\textbf{x}). 
\]

In view of this notions on a \emph{partial difference equation} we mean a functional equation that has the form 
\[
 \tag{1}
F\left(\underset{(x_{1}^{k_{1}}, \ldots, x_{n}^{k_{n}})}{\Delta_{h}^{N}}u(x_{1}, \ldots, x_{n}), \ldots,  u(x_{1}, \ldots, x_{n}), x_{1}, \ldots, x_{n}\right)=0, 
\]
where $F$ is a given function, $u$ is the unknown function which has to be determined and 
$k_{1}, \ldots, k_{n}\allowbreak\in\mathbb{N}\cup\left\{0\right\}$ are such that $\sum_{i=1}^{n}k_{i}=N$. 

Equation (1) is called an \emph{$N$\textsuperscript{th}-order equation} if it contains both 
$\Delta^{N}u$ and $u$. 
For the sake of simplicity, here we remark that other properties (e.g. linearity, quasilinearity, semilinearity etc.) of such type of equations can be defined 
analogously as in the theory of partial differential equations. Concerning partial difference equations we refer to the monograph of Cheng \cite{Cheng}.

\section{Classification of partial difference equations}

Let $\Omega\subset\mathbb{R}^{n}$, $m\in\mathbb{N}$ and $\gamma_{i}\colon \Omega\to\mathbb{R}$ given functions 
and $\rho_{j, i}\in\mathbb{R}$, $j=1, \ldots, n$, $i=1, \ldots, m$ and let us consider 
the operator $\mathcal{D}$ defined by 
\[
\mathscr{C}(\Omega)\ni u(x_{1}, \ldots, x_{n}) \xrightarrow{\mathcal{D}} \sum_{i=1}^{m}\gamma_{i}(x_{1}, \ldots, x_{n})\cdot 
u\left(x_{1}+\rho_{1, i}h, \ldots, x_{n}+\rho_{n, i}h\right). 
\]

Here the domain $\Omega$ is assumed to be such that $\mathbf{x}+\mathbf{\rho}_{i}h\in\Omega$ for all 
$i=1, \ldots, m$ and $\mathbf{x}=(x_{1}, \ldots, x_{n})\in\Omega$, where 
$\mathbf{\rho}_{i}=(\rho_{1, i}, \ldots, \rho_{n, i})$. 

In this case let 
\begin{multline*}
 A_{k, l}(\mathbf{x})
=A_{k, l}(x_{1}, \ldots, x_{n})
\\=
\sum_{i=1}^{m}\rho_{k, i}\rho_{l, i}\gamma_{i}(x_{1}, \ldots, x_{n}) 
\qquad 
\left(k, l=1, \ldots, n, \textbf{x}\in \Omega\right). 
\end{multline*}
Further, let us consider the (symmetric) $n\times n$ matrix valued function 
$Q$  defined by 
\[
 Q(\textbf{x})=
\begin{pmatrix}
 A_{11}(\textbf{x})& \ldots & A_{1n}(\textbf{x})\\
\vdots & \ddots & \vdots \\
A_{n1}(\textbf{x})& \ldots & A_{nn}(\textbf{x})
\end{pmatrix}
\qquad 
\left(\mathbf{x}\in\Omega\right). 
\]

We say that the operator $\mathcal{D}$ defined above is 
\begin{itemize}
 \item \emph{elliptic} at the point $\mathbf{x}\in\Omega$, if $Q(\mathbf{x})\in\mathscr{M}_{n\times n}(\mathbb{R})$ is a definite matrix;
\item \emph{hyperbolic} at the point $\mathbf{x}\in\Omega$, if $Q(\mathbf{x})\in\mathscr{M}_{n\times n}(\mathbb{R})$ is an indefinite matrix;
\item \emph{parabolic} at the point $\mathbf{x}\in\Omega$, if $Q(\mathbf{x})\in\mathscr{M}_{n\times n}(\mathbb{R})$ is a semidefinite matrix;
\end{itemize}

The systematic investigation of equations of the form 
\[
 \mathcal{D}(u)(\mathbf{x})+u(\mathbf{x})=\Phi(x)
\]
goes back to Baker \cite{Baker}, McKiernan \cite{McKiernan}, and {\'S}wiatak \cite{Swiatak}.  
The aim of this paper is to continue and to extend the results of 
John A.~Baker. Additionally,  not only the solutions will be derived but also 
we will get regularity theorems such as 'continuity implies infinitely many times differentiability'. 
Such type of results can be found e.g. in J\'{a}rai \cite{Jarai}. 

Finally, we remark that for linear, homogeneous, constant coefficient partial difference equations a 
method based on spectral synthesis was developed by L\'{a}szl\'{o} Sz\'{e}kelyhidi in \cite{Szekely}.

\section{Elliptic partial difference equations}

In this section we will investigate elliptic partial difference equations. In fact, as we shall 
see, they are nothing but the 'discrete' versions of elliptic partial differential equations. 
Here we remark that the investigations concerning hyperbolic and parabolic partial difference equations 
will appear as a continuation of this work. 

Among the most important of all partial differential equations are undoubtedly 
\emph{Laplace's equation}
\[
 \Delta u(\mathbf{x})=\displaystyle\sum_{i=1}^{n}\frac{\partial^{2}u(\mathbf{x})}{\partial x_{i}^{2}}=0 
\qquad 
\left(\mathbf{x}\in\Omega\right)
\]
and the \emph{Poisson equation}
\[
 -\Delta u(\mathbf{x})=f(\mathbf{x})
\qquad 
\left(\mathbf{x}\in\Omega\right). 
\]

In both equations the unknown function is $u\colon \overline{\Omega}\to\mathbb{R}$, 
where in general $\Omega\subset\mathbb{R}^{n}$ is assumed to be an open set. Further, 
in the second equation the function
$f\colon \Omega\to\mathbb{R}$ is also given.

\subsection{Auxiliary statements}

In this subsection we  list some preliminary tools that will be used subsequently, 
see also 
Evans \cite{Evans} and Rudin \cite{Rudin}. 

\begin{theorem}[Harnack's principle]
If the sequence functions $u_{n}\colon \Omega\to\mathbb{R}\, (n\in\mathbb{R})$  is harmonic in the domain $\Omega \subseteq\mathbb{R}^{n}$ and
$$u_1(x) \le u_2(x) \le \cdots$$
at every point of $\Omega$, then\, $\lim_{n\to\infty}u_n(x)$\, 
either is infinite at every point of the domain or it is finite at every point of the domain, in both cases 
uniformly in each closed subdomain of $\Omega$.\, 
In the latter case, the function \[u(x) = \lim_{n\to\infty}u_n(x)\qquad \left(x\in \Omega\right)\] is harmonic in the domain $\Omega$.  
\end{theorem}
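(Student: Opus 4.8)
The plan is to derive everything from \emph{Harnack's inequality}: for a nonnegative harmonic function $v$ on $\Omega$ and for every compact set $K\subset\Omega$ there is a constant $C=C(K,\Omega)\geq 1$, depending only on the geometry and not on $v$, such that $\sup_{K}v\leq C\inf_{K}v$. Since the given sequence is nondecreasing, for every pair $m>n$ the function $v_{m,n}=u_m-u_n$ is harmonic and nonnegative on $\Omega$, so Harnack's inequality applies to it with one and the same constant $C$.

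First I would establish the dichotomy by a connectedness argument. Fix $x_0\in\Omega$ and let $E$ be the set of those $x\in\Omega$ at which $\lim_{n\to\infty}u_n(x)$ is finite. Given any $y\in\Omega$, choose a small closed ball $K\subset\Omega$ centered at $y$. Applying the two-sided Harnack inequality to $v_{m,n}$ on $K$ shows that $u_m(z)-u_n(z)$ is, up to the factor $C$, comparable to $u_m(y)-u_n(y)$ uniformly for $z\in K$; consequently the limit is finite at $y$ if and only if it is finite throughout $K$, and likewise it is infinite at $y$ if and only if it is infinite throughout $K$. Hence both $E$ and its complement are open, and since $\Omega$ is a domain (connected), $E$ is either empty or all of $\Omega$.

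To upgrade pointwise convergence to local uniform convergence in the finite case, I would use the Cauchy criterion together with the same estimate: for $m>n$ and a compact $K\subset\Omega$ containing the fixed point $x_0$,
\[
\sup_{K}\,(u_m-u_n)\leq C\inf_{K}\,(u_m-u_n)\leq C\,\bigl(u_m(x_0)-u_n(x_0)\bigr),
\]
and the right-hand side tends to $0$ as $m,n\to\infty$. Thus $(u_n)$ is uniformly Cauchy, hence uniformly convergent, on every closed subdomain. The divergence alternative is treated by the same inequality read in the opposite direction, forcing $u_n\to+\infty$ uniformly on such subdomains. Finally, harmonicity of $u=\lim_n u_n$ follows because each $u_n$ obeys the spherical mean value property and the convergence is uniform on the spheres in question, so $u$ inherits that property; being a locally uniform limit of continuous functions $u$ is itself continuous, and a continuous function enjoying the mean value property is harmonic.

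The step I expect to be the main obstacle is the dichotomy: one has to exploit that Harnack's inequality is genuinely two-sided, so that \emph{both} the finite and the infinite alternatives propagate to full neighborhoods, and one must keep the constant $C$ uniform in $m,n$ by applying the inequality to the single nonnegative function $v_{m,n}$ rather than to $u_m$ and $u_n$ separately.
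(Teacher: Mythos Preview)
Your proof is correct and follows the standard textbook route to Harnack's principle via Harnack's inequality (as in Evans or Axler--Bourdon--Ramey): apply the inequality to the nonnegative harmonic differences $u_m-u_n$ with a constant depending only on the compact set, use connectedness for the dichotomy, the Cauchy criterion for local uniform convergence, and the mean value property for harmonicity of the limit. One small point worth tightening: when you write ``for a compact $K\subset\Omega$ containing the fixed point $x_0$'', make explicit that an arbitrary closed subdomain $K'$ can always be enlarged to a connected compact $K\subset\Omega$ containing $x_0$ (using that $\Omega$ is a domain), so the estimate transfers back to $K'$.

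As for comparison with the paper: there is nothing to compare. The paper states Harnack's principle as a preliminary tool in the ``Auxiliary statements'' subsection and gives no proof, referring instead to Evans and Rudin. Your argument is exactly the one a reader would find by following those references.
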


\begin{remark}
Harnack's principle can be generalized to monotone sequen\-ces of solutions of elliptic equations, as well. 
\end{remark}

\begin{defin}
 We say that the function $\varphi\in\mathscr{C}^{\infty}(\mathbb{R}^{n})$ is a 
\emph{mollifier}, if 
\begin{enumerate}[(i)]
 \item it is compactly supported
\item \[\int_{\mathbb{R}^{n}}\varphi(x)dx=1\]
\item \[\lim_{\varepsilon \to 0+}\frac{1}{\varepsilon^{n}}\varphi\left(\frac{x}{\varepsilon}\right)=\delta(x) \qquad \left(x\in\mathbb{R}^{n}\right), \]
where $\delta$ denotes the Dirac delta function and the limit is understood in the Schwartz space. 
\end{enumerate}

If the function $\varphi$ is a mollifier and for all $x\in\mathbb{R}^{n}$ $\varphi(x)\geq 0$ holds, then $\varphi$ is said to be \emph{nonnegative mollifier}. \\

In case $\varphi$ is a mollifier and
there exists a function $\sigma\in\mathscr{C}^{\infty}(\mathbb{R})$ such that 
\[
 \varphi(x)=\sigma(\left\| x\right\|) 
\qquad 
\left(x\in\mathbb{R}^{n}\right), 
\]
then $\varphi$ is termed to be a \emph{(radially) symmetric mollifier}. 
\end{defin}

It is easy to see that if 
\[
 \varphi(s)=
\begin{cases}
 \exp\left(-\frac{1}{1-s}\right), & \text{ if } s<1\\
0, & \text{otherwise}
\end{cases}
\]
and 
\[
 \omega(x)=\dfrac{\varphi(\left\| x\right\| ^{2})}{\int_{\mathbb{R}^{n}}\varphi(\left\| x\right\| ^{2})dx}
\qquad
\left(x\in\mathbb{R}^{n}\right), 
\]
then the functions $\omega_{\varepsilon}\colon \mathbb{R}^{n}\to \mathbb{R}$ defined by 
\[
 \omega_{\varepsilon}(x)=\frac{1}{\varepsilon^{n}}\omega \left(\frac{x}{\varepsilon}\right) 
\qquad 
\left(x\in \mathbb{R}^{n}\right)
\]
are nonnegative, symmetric mollifiers for all $\varepsilon>0$.

\begin{theorem}[Mollification Theorem]
 Let $f\in \mathbb{L}^{1}(\mathbb{R}^{n})$, then 
\begin{enumerate}[(i)]
 \item 
$
\displaystyle \lim_{\varepsilon\to 0+}\left\|f\ast \omega_{\varepsilon} -f \right\|_{\mathbb{L}^{1}}=0
$
\item for all  $\varepsilon>0$
\[
 \mathrm{supp}(f\ast \omega_{\varepsilon})\subset \mathrm{supp}(f)+\overline{B}(0, \varepsilon)
\]
holds, where $\overline{B}(0, \varepsilon)=\left\{x\in\mathbb{R}^{n}\, \vert \, \left\| x\right\|\leq \varepsilon\right\}$. 
\item every sequence $(\varepsilon_{n})_{n\in\mathbb{N}}$ of positive real numbers 
admits a subsequence $(\varepsilon_{n_{k}})_{k\in\mathbb{N}}$ such that 
$f\ast \omega_{\varepsilon_{n_{k}}}$ converges to $f$ almost everywhere
\item if additionally, $f\in\mathscr{C}(\Omega)$, where $\Omega \subset \mathbb{R}^{n}$ is an open set, 
then 
\[
 \lim_{k\to\infty}f\ast \omega_{\varepsilon_{n_{k}}}(x)=f(x) 
\qquad 
\left(x\in \Omega\right)
\]
and the convergence is uniform on any compact subset of $\Omega$ 
\item for all $\varepsilon>0$ we have $f\ast\omega_{\varepsilon}\in\mathscr{C}^{\infty}(\mathbb{R}^{n})$, further, 
if $\alpha\in\mathbb{N}^{n}_{0}$ is a multi index, then 
\[
 \partial^{\alpha}\left(f\ast\omega_{\varepsilon}\right)=f\ast\left(\partial^{\alpha}\omega_{\varepsilon}\right). 
\]
\end{enumerate}
\end{theorem}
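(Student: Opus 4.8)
The plan is to prove the five assertions of the Mollification Theorem by reducing everything to the standard machinery of approximate identities. The central observation is that the family $\omega_\varepsilon$ is a nonnegative, symmetric mollifier (as established just before the statement): each $\omega_\varepsilon$ is smooth, nonnegative, integrates to $1$, and is supported in $\overline{B}(0,\varepsilon)$. These four facts are exactly what drive the whole argument, so I would isolate them first and then invoke them repeatedly.

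For assertion (i), I would start from the translation-invariance of the Lebesgue integral to write
\[
(f\ast\omega_\varepsilon)(x)-f(x)=\int_{\mathbb{R}^n}\bigl(f(x-y)-f(x)\bigr)\omega_\varepsilon(y)\,dy,
\]
using $\int\omega_\varepsilon=1$. Taking the $\mathbb{L}^1$ norm, applying the Fubini--Tonelli theorem to exchange the order of integration, and substituting $y=\varepsilon z$, the estimate collapses to $\int_{\overline{B}(0,1)}\tau_{\varepsilon z}f-f)\|_{\mathbb{L}^1}\,\omega(z)\,dz$, where $\tau$ denotes translation. The claim then follows from the continuity of translation in $\mathbb{L}^1$ together with the dominated convergence theorem, the dominating function being $2\|f\|_{\mathbb{L}^1}\omega(z)$. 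Assertion (ii) is the most elementary: if $(f\ast\omega_\varepsilon)(x)\neq 0$ then there must exist $y\in\mathrm{supp}(f)$ with $x-y\in\mathrm{supp}(\omega_\varepsilon)\subseteq\overline{B}(0,\varepsilon)$, which gives the stated support inclusion directly.

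Assertion (iii) is a standard consequence of (i): $\mathbb{L}^1$ convergence along the sequence $\varepsilon_n\to 0$ yields a subsequence converging almost everywhere, by the usual Riesz-type theorem that every $\mathbb{L}^1$-convergent sequence has an a.e.-convergent subsequence. For assertion (iv), I would again use $\int\omega_\varepsilon=1$ to write the difference as an integral of $f(x-y)-f(x)$ against $\omega_\varepsilon$, then exploit the support bound from (ii) to confine $y$ to $\overline{B}(0,\varepsilon)$; continuity of $f$ on the open set $\Omega$ gives pointwise convergence, and uniform continuity of $f$ on compact sets upgrades this to uniform convergence on compacta. Assertion (v) is the differentiation-under-the-integral-sign argument: since $\omega_\varepsilon\in\mathscr{C}^\infty$ is compactly supported with all derivatives bounded, one differentiates $(f\ast\omega_\varepsilon)(x)=\int f(y)\omega_\varepsilon(x-y)\,dy$ under the integral, justified by the dominated convergence theorem applied to difference quotients, and iterates to obtain $\partial^\alpha(f\ast\omega_\varepsilon)=f\ast(\partial^\alpha\omega_\varepsilon)$.

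The main obstacle is not any single assertion but the rigorous justification of the limit interchanges. In (i) and (iv) the real content is the continuity of translation in $\mathbb{L}^1$ and the uniform continuity of $f$ on compacta, respectively; in (v) it is verifying that the difference quotients are dominated uniformly so that differentiation passes inside the integral. I expect the $\mathbb{L}^1$-continuity-of-translation step in part (i) to require the most care, since it itself rests on approximating $f$ by compactly supported continuous functions; everything else is then a matter of bookkeeping with the support and normalization properties of $\omega_\varepsilon$.
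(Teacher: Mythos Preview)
The paper does not actually prove this theorem: it appears in the subsection ``Auxiliary statements'' as a preliminary tool, stated without proof and accompanied only by a pointer to the textbooks of Evans and Rudin. Your outline is precisely the standard textbook argument one finds in those references --- approximate identities via continuity of translation in $\mathbb{L}^{1}$, the Riesz subsequence principle for a.e.\ convergence, uniform continuity on compacta for part (iv), and differentiation under the integral sign for part (v) --- and it is correct.

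Two minor remarks. First, in part (iii) the paper's hypothesis reads only ``every sequence $(\varepsilon_{n})$ of positive real numbers,'' but as you tacitly assumed, the conclusion requires $\varepsilon_{n}\to 0$; the paper clearly intends null sequences and uses them that way in the subsequent application. Second, your displayed estimate in part (i) has a stray unmatched parenthesis and norm bar; you presumably meant
\[
\int_{\overline{B}(0,1)}\left\|\tau_{\varepsilon z}f-f\right\|_{\mathbb{L}^{1}}\,\omega(z)\,dz,
\]
which is indeed the right bound to feed into dominated convergence.
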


\subsection{The discrete Laplace's equation}

In this subsection we will investigate the \emph{discrete Lapla\-ce's equation}, that is, 
\[
\tag{$\mathcal{L}$}
\sum_{i=1}^{n} \underset{(x_{i}^{2})}{\Delta_{h}^{2}}u(\mathbf{x})=
\sum_{i=1}^{n}\left[u(\mathbf{x}+2e_{i}h)-2u(\mathbf{x}+e_{i}h)+u(\mathbf{x})\right]=0. 
\]

Let us observe that in this case, the corresponding coefficient matrix $Q(\mathbf{x})$ is the 
$n\times n$ identity matrix at every point $\mathbf{x}\in\mathbb{R}^{n}$. 
Thus equation $(\mathcal{L})$ can be considered as the 
most representative example for the class of elliptic partial difference equations. 

\begin{theorem}\label{T1}
 Let $u\colon \mathbb{R}^{n}\to\mathbb{R}$ be a continuous function  
and assume that $u$ fulfills the discrete Laplace's equation on $\mathbb{R}^{n}$. 
Then the function $u$ is harmonic on $\mathbb{R}^{n}$. 
\end{theorem}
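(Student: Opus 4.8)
The plan is to reduce the merely continuous case to the smooth case by mollification, to handle the smooth case by a Taylor expansion that converts the discrete second differences into genuine second derivatives, and finally to transfer harmonicity back to $u$ by a limiting argument. First I would regularise $u$: for $\varepsilon>0$ set $u_{\varepsilon}=u\ast\omega_{\varepsilon}$ with the symmetric mollifiers $\omega_{\varepsilon}$ constructed above (working on a fixed large ball if necessary, since a continuous $u$ is only locally integrable). By part~(v) of the Mollification Theorem $u_{\varepsilon}\in\mathscr{C}^{\infty}(\mathbb{R}^{n})$. Moreover every summand in $(\mathcal{L})$ is a translate of $u$, and convolution commutes with translations, so for every $\mathbf{x}$ and every step $h$
\[
 \sum_{i=1}^{n}\underset{(x_{i}^{2})}{\Delta_{h}^{2}}u_{\varepsilon}(\mathbf{x})
 =\int_{\mathbb{R}^{n}}\left[\sum_{i=1}^{n}\underset{(x_{i}^{2})}{\Delta_{h}^{2}}u(\mathbf{x}-\mathbf{y})\right]\omega_{\varepsilon}(\mathbf{y})\,d\mathbf{y}=0,
\]
that is, $u_{\varepsilon}$ again solves the discrete Laplace equation. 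By parts~(iv)--(v) of the same theorem $u_{\varepsilon}\to u$ uniformly on every compact subset of $\mathbb{R}^{n}$ as $\varepsilon\to 0+$.

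Next I would settle the smooth case. Since $u_{\varepsilon}$ is of class $\mathscr{C}^{\infty}$, a second order Taylor expansion of $t\mapsto u_{\varepsilon}(\mathbf{x}+t e_{i})$ gives, for each fixed $\mathbf{x}$,
\[
 \underset{(x_{i}^{2})}{\Delta_{h}^{2}}u_{\varepsilon}(\mathbf{x})
 =h^{2}\,\frac{\partial^{2}u_{\varepsilon}}{\partial x_{i}^{2}}(\mathbf{x})+O(h^{3})\qquad(h\to 0).
\]
Summing over $i$, invoking that the left hand side vanishes for every $h$, dividing by $h^{2}$ and letting $h\to 0+$ produces $\Delta u_{\varepsilon}(\mathbf{x})=0$. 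Hence each $u_{\varepsilon}$ is harmonic.

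Finally I would pass to the limit $\varepsilon\to 0+$. Every $u_{\varepsilon}$ is harmonic and therefore satisfies the mean value property over spheres; since $u_{\varepsilon}\to u$ uniformly on compact sets, these mean value identities survive the limit, so the continuous function $u$ itself enjoys the mean value property and is consequently harmonic. (Equivalently, one may quote the companion of Harnack's principle stating that a locally uniform limit of harmonic functions is harmonic.)

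The one genuinely delicate point is the passage in the smooth case, where dividing by $h^{2}$ and letting $h\to 0$ turns the difference equation into $\Delta u_{\varepsilon}=0$: this step uses the equation for a whole null sequence of step sizes, and indeed for a single fixed $h$ the conclusion fails, since in one variable any $h$-periodic function solves $(\mathcal{L})$ without being affine. The remaining care concerns the mollification of a continuous but possibly non-integrable $u$, which is handled by localising to balls before applying the Mollification Theorem.
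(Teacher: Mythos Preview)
Your argument is correct and follows the same overall template as the paper: mollify $u$ to obtain smooth solutions $u_{\varepsilon}$ of $(\mathcal{L})$, upgrade these to harmonic functions, and then pass to the limit. The differences are in how two of the steps are executed. First, you make the passage from the discrete equation to $\Delta u_{\varepsilon}=0$ explicit via a Taylor expansion in $h$ (and you correctly flag that this requires the equation for an entire null sequence of step sizes, not a single $h$); the paper's proof of this theorem suppresses that step, though it appears later in the Poisson case as ``differentiating twice with respect to $h$ and putting $h=0$''. Second, for the limit you use locally uniform convergence together with preservation of the mean value property, whereas the paper instead selects a null sequence $(\varepsilon_{n})$ along which $(u_{\varepsilon_{n}})$ is monotone and invokes Harnack's principle. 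Your route is the more robust of the two: the existence of such a monotone subsequence is not obvious in general, while the fact that a locally uniform limit of harmonic functions is harmonic is standard and needs no extra hypothesis. Your remark about localising the convolution to balls is a sensible precaution, though since $\omega_{\varepsilon}$ has compact support the integral defining $u\ast\omega_{\varepsilon}$ is already finite for any continuous $u$.
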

\begin{proof}
 Assume that for all $\varepsilon>0$ the functions $\varphi_{\varepsilon}\in\mathscr{C}^{\infty}(\mathbb{R}^{n})$ 
are nonnegative, symmetric mollifiers and let 
\[
 u_{\varepsilon}(\mathbf{z})
=
\left(u\ast \varphi_{\varepsilon}\right)(\mathbf{z})
=
\int_{\mathbb{R}^{n}}u(\mathbf{x})\varphi_{\varepsilon}(\mathbf{z}-\mathbf{x})d\mathbf{x} 
\qquad 
\left(\mathbf{z}\in\mathbb{R}^{n}\right). 
\]
Due to  linearity of equation $(\mathcal{L})$ and because of the properties of the convolution, 
\[
 \sum_{i=1}^{n} \underset{(x_{i}^{2})}{\Delta_{h}^{2}}u_{\varepsilon}(\mathbf{z})=0 
\qquad 
\left(\mathbf{z}\in\mathbb{R}^{n}\right). 
\]
In other words, this means that for all $\varepsilon>0$ the functions 
$u_{\varepsilon}$ also fulfill equation $(\mathcal{L})$. Additionally, due to the 
Mollification Theorem, we also have that 
\[
 \lim_{\varepsilon\to 0+}\left\|u_{\varepsilon}-u \right\|_{\mathbb{L}^{1}}=0. 
\]
Now let $(\varepsilon_{n})_{n\in\mathbb{N}}$ be a null sequence for which the 
sequence of functions $(u_{\varepsilon_{n}})_{n\in\mathbb{N}}$ is monotone. Form this 
we get that the limit $\lim_{n\to\infty}u_{\varepsilon_{n}}(\mathbf{z})$
exists for all $\mathbf{z}\in\mathbb{R}^{n}$ and the limit function 
is a harmonic function, too. 

This means that the function $u$ is a harmonic function. 
\end{proof}

\begin{remark}
 In view of the previous theorem we immediately get that if a continuous 
function satisfies equation $(\mathcal{L})$, then it is automatically 
in the class $\mathscr{C}^{\infty}(\mathbb{R}^{n})$. 

It is obvious however, that equation $(\mathcal{L})$ has nowhere continuous solutions, as well. 
To see this, let $\alpha\colon \mathbb{R}\to\mathbb{R}$ be a nowhere continuous additive function 
(see Corollary 5.2.2 in \cite[page 130.]{Kuczma}) and 
\[
 u(\mathbf{x})=u(x_{1}, \ldots, x_{n})=\prod_{i=1}^{n}\alpha(x_{i}) 
\qquad 
\left(\mathbf{x}\in\mathbb{R}^{n}\right). 
\]
\end{remark}

Finally, we remark that the method we used in the proof of Theorem \ref{T1} 
is also appropriate to solve more general elliptic equations such as 
\[
 \dfrac{1}{h^{2}}\sum_{i=1}^{m}\gamma_{i}u(\mathbf{x}+\mathbf{\rho}_{i}h)+
u(\mathbf{x})=\Phi(\mathbf{x}). 
\]
Assume namely that is equation is elliptic. 
Then the coefficient matrix (which will be independent of 
$\mathbf{x}$ in this case), is 
\[
Q=
 \begin{pmatrix}
 A_{11}& \ldots & A_{1n}\\
\vdots & \ddots & \vdots \\
A_{n1}& \ldots & A_{nn}
\end{pmatrix}
\]
a definite matrix. Here, 
\[
 A_{k, l}=\sum_{i=1}^{m}\gamma_{i}\rho_{k, i}\rho_{l, i} 
\qquad 
\left(k, l=1, \ldots, n\right). 
\]
Thus applying the method based on the Mollification Theorem, we obtain that 
\[
 \sum_{k, l=1}^{n}A_{k, l}\dfrac{\partial^{2}u_{\varepsilon}(\mathbf{z})}{\partial x_{k}\partial x_{k}}+
u_{\varepsilon}(\mathbf{z})=\Phi_{\varepsilon}(\mathbf{z})
\]
holds for all $z\in\mathbb{R}^{n}$ and for all $\varepsilon>0$, where 
\[
 u_{\varepsilon}(\mathbf{z})
=
\left(u\ast \varphi_{\varepsilon}\right)(\mathbf{z})
\qquad 
\text{and}
\qquad 
 \Phi_{\varepsilon}(\mathbf{z})
=
\left(\Phi\ast \varphi_{\varepsilon}\right)(\mathbf{z})
\qquad 
\left(\mathbf{z}\in\mathbb{R}^{n}\right).
\]
Since the matrix $Q$ is definite, this latter partial differential equation 
is of elliptic type. Thus, after a suitable change of the variables, it can be 
transformed into its canonical form. Using the theory of elliptic partial differential 
equations, the solution of the original partial difference 
equation can be obtained easily, with the help of Harnack's principle.

\subsection{The discrete Poisson equation}

Now we consider the \emph{discrete Poisson equation}, 
that is, 
\[
\tag{$\mathcal{P}$}
\dfrac{1}{h^{2}}\sum_{i=1}^{n} \underset{(x_{i}^{2})}{\Delta_{h}^{2}}u(\mathbf{x})=f(\mathbf{x}),  
\]
where $f\colon \mathbb{R}^{n}\to\mathbb{R}$ is a given function. 

\begin{theorem}\label{T2}
Let $f\colon \mathbb{R}^{n}\to\mathbb{R}$ be a continuous, compactly supported function and 
assume that 
 $u\colon \mathbb{R}^{n}\to\mathbb{R}$ is a continuous function 
which fulfills the discrete Poisson equation on $\mathbb{R}^{n}$. 
Then 
\[
 u(\mathbf{x})=\int_{\mathbb{R}^{n}}\Phi(\mathbf{x}-\mathbf{y})f(\mathbf{y})d\mathbf{y} +C 
\qquad 
\left(\mathbf{x}\in\mathbb{R}^{n}\right), 
\]
where $C$ is a real constant and $\Phi$ stands for the fundamental solution 
of the Laplace's equation
\[
 \Phi(\mathbf{x})=\frac{1}{2\pi}\ln(\left\| \mathbf{x}\right\|)
\qquad 
\left(\mathbf{x}\in\mathbb{R}^{n}\right)
\]
 if $n=2$ and
\[
\Phi(\mathbf{x})=-\frac{1}{(n-2)s_{n}}\left\| \mathbf{x}\right\|^{-n+2}
\qquad 
\left(\mathbf{x}\in\mathbb{R}^{n}\right)
\]
if $n\geq 3$,  
where $s_{n}$ denotes the area of the $n$-dimensional unit sphere. 
\end{theorem}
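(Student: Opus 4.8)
The plan is to imitate the strategy of Theorem \ref{T1}: first smooth out $u$ and $f$ by convolution, pass to the classical Poisson equation at the mollified level, solve that equation by the Newtonian (respectively logarithmic) potential, and finally let the mollification parameter tend to zero. First I would fix a family of nonnegative, symmetric mollifiers $\varphi_{\varepsilon}$ and set $u_{\varepsilon}=u\ast\varphi_{\varepsilon}$ and $f_{\varepsilon}=f\ast\varphi_{\varepsilon}$. Since equation $(\mathcal{P})$ is linear and convolution commutes with the translations defining the partial differences, $u_{\varepsilon}$ satisfies $\frac{1}{h^{2}}\sum_{i=1}^{n}\underset{(x_{i}^{2})}{\Delta_{h}^{2}}u_{\varepsilon}(\mathbf{z})=f_{\varepsilon}(\mathbf{z})$ for every $\mathbf{z}$ and every $h$. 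Because $u_{\varepsilon}\in\mathscr{C}^{\infty}(\mathbb{R}^{n})$ by part (v) of the Mollification Theorem, I would Taylor expand the normalized second difference; the factor $1/h^{2}$ in $(\mathcal{P})$ is exactly the one for which $\frac{1}{h^{2}}\underset{(x_{i}^{2})}{\Delta_{h}^{2}}u_{\varepsilon}\to\partial^{2}u_{\varepsilon}/\partial x_{i}^{2}$, so letting $h\to 0$ yields the classical Poisson equation $\Delta u_{\varepsilon}=f_{\varepsilon}$ on $\mathbb{R}^{n}$.

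Next, since $f_{\varepsilon}$ is smooth and compactly supported (part (ii) of the Mollification Theorem keeps the support bounded), the potential $w_{\varepsilon}:=\int_{\mathbb{R}^{n}}\Phi(\cdot-\mathbf{y})f_{\varepsilon}(\mathbf{y})\,d\mathbf{y}=\Phi\ast f_{\varepsilon}$ is a genuine solution, $\Delta w_{\varepsilon}=f_{\varepsilon}$, by the defining property of the fundamental solution $\Phi$ (with the sign convention used in the statement, so that $\Delta\Phi$ acts as the Dirac mass). Hence $g_{\varepsilon}:=u_{\varepsilon}-w_{\varepsilon}$ is harmonic on $\mathbb{R}^{n}$. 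Now I would take $\varepsilon\to 0$: by the Mollification Theorem $u_{\varepsilon}\to u$ and $f_{\varepsilon}\to f$ locally uniformly (using continuity of $u$ and $f$), and consequently $w_{\varepsilon}\to\Phi\ast f$ locally uniformly, because the densities converge in $\mathbb{L}^{1}$ with uniformly bounded supports. Choosing, as in Theorem \ref{T1}, a null sequence $(\varepsilon_{n})$ along which $(g_{\varepsilon_{n}})$ is monotone, Harnack's principle guarantees that the limit $g:=u-\Phi\ast f$ is harmonic on $\mathbb{R}^{n}$.

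The main obstacle is the final step: showing that the harmonic remainder $g$ is in fact the constant $C$. Harmonicity alone does not suffice here, since non-constant harmonic functions exist on all of $\mathbb{R}^{n}$ — indeed $x_{1}^{2}-x_{2}^{2}$ already satisfies the discrete Laplace equation $(\mathcal{L})$ — so some growth or boundedness information on $g$ must be extracted before one can conclude. The plan at this point is to combine the known decay of the potential $\Phi\ast f$ at infinity (for $n\geq 3$) with growth control on $u$ inherited from the discrete equation, so as to bound $g$ and then invoke Liouville's theorem to obtain $g\equiv C$. Supplying this side condition, rather than running the mollification machinery, is where the real work lies; everything preceding it is routine once Theorem \ref{T1} and the auxiliary statements are in hand.
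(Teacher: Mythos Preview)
Your mollification argument mirrors the paper's proof almost step for step: convolve with $\varphi_{\varepsilon}$, use linearity to carry equation $(\mathcal{P})$ over to $u_{\varepsilon}$ and $f_{\varepsilon}$, exploit the $\mathscr{C}^{\infty}$-regularity of $u_{\varepsilon}$ to pass from the second difference to the Laplacian (the paper phrases this as ``differentiating twice with respect to $h$ and putting $h=0$'', which is the same Taylor computation you describe), and then invoke Harnack's principle to transport the conclusion $\Delta u_{\varepsilon}=f_{\varepsilon}$ to $\Delta u=f$. At that point the paper simply cites Theorem~1 of Evans \cite[p.~22]{Evans} for the representation by the Newtonian potential and stops.

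The obstacle you isolate in your last paragraph is a genuine gap, and it is a gap in the paper as well, not only in your write-up. Evans' Theorem~1 asserts that $\Phi\ast f$ \emph{is} a solution of $\Delta u=f$; it says nothing about uniqueness, and on all of $\mathbb{R}^{n}$ uniqueness up to an additive constant is simply false without a side condition. Your own example makes this concrete: $x_{1}^{2}-x_{2}^{2}$ satisfies the discrete Laplace equation $(\mathcal{L})$ identically in $h$, so if $u$ solves $(\mathcal{P})$ then so does $u+(x_{1}^{2}-x_{2}^{2})$, and the two differ by a non-constant function. Hence the conclusion $u=\Phi\ast f+C$ cannot be obtained from the stated hypotheses alone, by either your route or the paper's. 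Your instinct to look for growth control on $u$ and then apply Liouville is the right repair, but that control must come from an additional hypothesis (e.g.\ boundedness of $u$, or vanishing at infinity), not from the discrete equation itself.
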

\begin{proof}
Similarly as in the previous theorem, assume that for all $\varepsilon>0$ the functions $\varphi_{\varepsilon}\in\mathscr{C}^{\infty}(\mathbb{R}^{n})$ 
are nonnegative, symmetric mollifiers and let 
\[
 u_{\varepsilon}(\mathbf{z})
=
\int_{\mathbb{R}^{n}}u(\mathbf{z})\varphi_{\varepsilon}(\mathbf{z}-\mathbf{x})d\mathbf{x} 
\qquad 
\left(\mathbf{z}\in\mathbb{R}^{n}\right). 
\]
Due to the linearity of equation $(\mathcal{P})$ and because of the properties of the convolution, 
we obtain that
\[
 \sum_{i=1}^{n} \underset{(x_{i}^{2})}{\Delta_{h}^{2}}u_{\varepsilon}(\mathbf{z})=h^{2} f_{\varepsilon}(\mathbf{x})
\qquad 
\left(\mathbf{z}\in\mathbb{R}^{n}\right),  
\]
where 
\[
 f_{\varepsilon}(\mathbf{z})=(f\ast \varphi_{\varepsilon})(\mathbf{z})
\qquad 
\left(\mathbf{z}\in\mathbb{R}^{n}\right). 
\]
In other words, this means that for all $\varepsilon>0$ the functions 
$u_{\varepsilon}$ fulfill a difference equation similar to equation $(\mathcal{L})$. 
Additionally, due to the 
Mollification Theorem, we also have that 
\[
 \lim_{\varepsilon\to 0+}\left\|u_{\varepsilon}-u \right\|_{\mathbb{L}^{1}}=0. 
\]
Since for all $\varepsilon>0$, we have $u_{\varepsilon}, f_{\varepsilon}\in\mathscr{C}^{\infty}(\mathbb{R}^{n})$, 
by differentiating twice with respect to $h$ and putting $h=0$, we obtain that 
\[
 \Delta u_{\varepsilon}(\mathbf{z})=f_{\varepsilon}(\mathbf{z})
\qquad
\left(\mathbf{z}\in\mathbb{R}^{n}\right), 
\]
yielding that for all $\varepsilon>0$ the functions 
$u_{\varepsilon}$ fulfill a Poisson equation. 
Applying a version of Harnack's principle concerning elliptic 
equations finally we get that the function $u$ is a solution to the 
partial differential equation 
\[
 \Delta u(\mathbf{z})=f(\mathbf{z})
\qquad 
\left(\mathbf{z}\in\mathbb{R}^{n}\right). 
\]
The statement of the theorem follows now from Theorem 1. of Evans \cite[page 22.]{Evans}. 
\end{proof}

\subsection{The discrete biharmonic equation}

Finally, we consider a fourth order partial difference equation, namely, the 
\emph{discrete biharmonic equation}
\[
 \tag{$\mathcal{B}$} 
\dfrac{1}{h^{4}}\sum_{i=1}^{n} \underset{(x_{i}^{4})}{\Delta_{h}^{4}}u(\mathbf{x})+
\dfrac{1}{h^{4}}\sum_{\underset{i\neq j}{i, j=1}}^{n} \underset{(x_{i}^{2}x_{j}^{2})}{\Delta_{h}^{4}}u(\mathbf{x})=0. 
\]

The proof of the following result is similar to those  in the previous subsections, hence we omit it.

\begin{theorem}
 Let $u\colon \mathbb{R}^{n}\to\mathbb{R}$ be a continuous function
and assume that $u$ fulfills the discrete biharmonic equation on $\mathbb{R}^{n}$. 
Then the function $u$ is biharmonic on $\mathbb{R}^{n}$, that is, 
\[
\Delta\Delta u(\mathbf{x})=
 \sum_{i=1}^{n}\dfrac{\partial^{4}u(\mathbf{x})}{\partial x_{i}^{4}}+
\sum_{\underset{i\neq j}{i, j=1}}^{n}\dfrac{\partial^{4}u(\mathbf{x})}{\partial x_{i}^{2}x_{j}^{2}}=0
\qquad 
\left(\mathbf{x}\in\mathbb{R}^{n}\right). 
\]
\end{theorem}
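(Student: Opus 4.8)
The plan is to imitate the mollification scheme of Theorems \ref{T1} and \ref{T2}. For every $\varepsilon>0$ fix a nonnegative, symmetric mollifier $\varphi_\varepsilon\in\mathscr{C}^\infty(\mathbb{R}^n)$ and put $u_\varepsilon=u\ast\varphi_\varepsilon$. Since $(\mathcal{B})$ is linear with constant coefficients and convolution commutes with translations, hence with each difference operator $\Delta_h$, the smooth functions $u_\varepsilon$ again satisfy $(\mathcal{B})$ for every $h$ and every $\varepsilon>0$. By the Mollification Theorem we moreover have $\left\|u_\varepsilon-u\right\|_{\mathbb{L}^1}\to 0$, and, because $u$ is continuous, parts (iii)--(iv) furnish a null sequence $(\varepsilon_k)$ along which $u_{\varepsilon_k}\to u$ uniformly on every compact subset of $\mathbb{R}^n$.

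Next I would extract the differential equation satisfied by the smooth $u_\varepsilon$ by Taylor expanding in the step size $h$. For a $\mathscr{C}^\infty$ function the forward fourth difference obeys $\underset{(x_i)}{\Delta_h^4}u_\varepsilon=h^4\,\partial_{x_i}^4u_\varepsilon+O(h^5)$, while the mixed term factors as $\underset{(x_i^2x_j^2)}{\Delta_h^4}u_\varepsilon=\underset{(x_i)}{\Delta_h^2}\,\underset{(x_j)}{\Delta_h^2}u_\varepsilon=h^4\,\partial_{x_i}^2\partial_{x_j}^2u_\varepsilon+O(h^5)$; all contributions of order lower than $h^4$ drop out because $\Delta_h^k$ annihilates polynomials of degree below $k$. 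Hence the left-hand side of $(\mathcal{B})$, read as a smooth function $G(h)$ of $h$, equals $h^4\left(\sum_{i=1}^n\partial_{x_i}^4u_\varepsilon+\sum_{i\neq j}\partial_{x_i}^2\partial_{x_j}^2u_\varepsilon\right)+O(h^5)=h^4\,\Delta\Delta u_\varepsilon+O(h^5)$. As $(\mathcal{B})$ forces $G(h)\equiv 0$, letting $h\to 0$ after dividing by $h^4$ (equivalently, differentiating four times with respect to $h$ and putting $h=0$, which multiplies the biharmonic operator by $4!$) yields $\Delta\Delta u_\varepsilon=0$. Thus each $u_\varepsilon$ is biharmonic.

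It remains to pass to the limit $\varepsilon\to 0+$. Because the class of biharmonic functions is closed under locally uniform convergence---interior elliptic estimates for the constant-coefficient elliptic operator $\Delta\Delta$ bound all derivatives on a compact set by the supremum on a slightly larger set, so that the derivatives of $u_{\varepsilon_k}$ converge locally uniformly as well---the locally uniform limit $u$ of the biharmonic functions $u_{\varepsilon_k}$ is itself biharmonic, that is, $\Delta\Delta u=0$ on $\mathbb{R}^n$, which is the assertion.

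I expect this limiting step to be the genuine obstacle. The monotone-sequence version of Harnack's principle invoked for Theorems \ref{T1} and \ref{T2} rests on the maximum principle and the Harnack inequality, phenomena special to second-order elliptic operators; the biharmonic operator possesses neither, so that argument does not transfer verbatim and the monotone subsequence used before is no longer the right device. The honest substitute is the closedness of biharmonic functions under locally uniform limits supplied by interior estimates (or, equivalently, by the Almansi and mean-value representations of biharmonic functions), and this is exactly the ingredient one must supply to make the phrase ``similar to those in the previous subsections'' rigorous. The Taylor-coefficient bookkeeping of the second paragraph, by contrast, is routine; the only care needed there is to verify that the $h^4$-coefficient reproduces exactly the operator $\sum_{i}\partial_{x_i}^4+\sum_{i\neq j}\partial_{x_i}^2\partial_{x_j}^2=\Delta\Delta$.
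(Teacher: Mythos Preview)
Your argument is correct and follows precisely the mollification scheme the paper points to when it writes that the proof is ``similar to those in the previous subsections''; the paper gives no further details, so you have in fact supplied what the paper omits.

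One point is worth flagging as a comparison rather than a correction. The paper's earlier proofs lean on Harnack's principle for the limiting step, and its ``similar'' presumably intends the same here. You rightly observe that this does not carry over verbatim: the bilaplacian has neither a maximum principle nor a Harnack inequality, so a monotone-subsequence argument is not available. Your substitute---closedness of biharmonic functions under locally uniform limits via interior estimates for the constant-coefficient elliptic operator $\Delta^{2}$ (equivalently, hypoellipticity of $\Delta^{2}$ applied to the distributional identity $\Delta^{2}u=0$ obtained from $u_{\varepsilon}\to u$ in $L^{1}_{\mathrm{loc}}$)---is the correct replacement and makes the passage to the limit rigorous. In this respect your write-up is more careful than the paper's omitted proof.
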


\begin{remark}
 Similarly, as in case of the discrete Poisson equation, the solutions of the 
inhomogeneous biharmonic equation 
\[
 \dfrac{1}{h^{4}}\sum_{i=1}^{n} \underset{(x_{i}^{4})}{\Delta_{h}^{4}}u(\mathbf{x})+
\dfrac{1}{h^{4}}\sum_{i, j=1}^{n} \underset{(x_{i}^{2}x_{j}^{2})}{\Delta_{h}^{4}}u(\mathbf{x})=\Phi(\mathbf{x})
\]
can be derived in the same way as in the proof of Theorem \ref{T2}. 
\end{remark}


\vspace{1cm}

\noindent\textbf{Eszter Gselmann}\\
Institute of Mathematics\\
University of Debrecen\\
Egyetem t\'{e}r 1.\\
Debrecen\\
Hungary\\
{\tt gselmann@science.unideb.hu}\\

\end{document}